\theoremstyle{plain}
\newtheorem{theorem}{Theorem}[section]
\newtheorem{proposition}[theorem]{Proposition}
\theoremstyle{definition}
\newcommand{\N}{{\mathbb{N}}}
\newcommand{\Q}{{\mathbb{Q}}}
\begin{document}

\title[An atomic non-ACCP domain]{A new simple example of an atomic domain which is not ACCP}

\thanks{$\ast$ the corresponding author}
\author{Hamid Kulosman$^\ast$}
\address{Department of Mathematics\\ 
University of Louisville\\
Louisville, KY 40292, USA}
\email{hamid.kulosman@louisville.edu}

\subjclass[2010]{Primary 13F15, 13A05; Secondary 13G05, 20M25}

\keywords{Atomic domain, ACCP domain, monoid domain $F[X;M]$}

\date{}

\begin{abstract} 
We give a new simple example of an atomic domain which is not ACCP. Our example is a monoid domain $F[X;M]$, where $F$ is a field and $M$ is a submonoid of the additive monoid of nonnegative rational numbers. 
\end{abstract}

\maketitle

\section{Introduction}\label{intro}
It is well-known that in an ACCP domain (i.e., a domain in which every increasing sequence of principal ideals is stationary) every non-zero non-unit element can be written as a finite product of irreducible elements (also called atoms). ``Somewhat surprisingly'', to quote \cite{aaz}, ``the converse is not true; an atomic domain need not satisfy ACCP, but examples are hard to come by.'' The first such example is due to A.~Grams in \cite{grams}, where she also mentioned that P.~M.~Cohn conjectured that another domain is atomic but not ACCP. (Note that in \cite{c} Cohn assumed that these two notions are equivalent.) A new type of examples was given by A.~Zaks in \cite{zaks}, where he also proved Cohn's conjecture. The goal of this note is to give a new simple example of an atomic non-ACCP domain. Our example is a monoid domain $F[X;M]$, where $F$ is a field and $M$ is a submonoid of the additive monoid of nonnegative rational numbers. We would like to mention that, in general,  it is much easier to work with the localization of $F[X;M]$ with respect to its maximal ideal consisting of ``polynomials'' with zero constant term, than with $F[X;M]$ itself, nevertheless our example is fairly simple.

\section{Notation and preliminaries}\label{notation_prelims}
We begin by recalling some definitions and statements. All the notions that we use  but not define in this paper, as well as the definitions and statements for which we do not specify the source,  can be found in the classical reference books \cite{c_book} by P.~M.~Cohn, \cite{gil} by R.~Gilmer, \cite{k} by I.~Kaplansky, and \cite{n} by D.~G.~Northcott, and in the paper \cite{gk}.

\medskip
We use $\subseteq$ to denote inclusion and $\subset$ to denote strict inclusion. We denote $\N=\{1,2,\dots\}$, $\N_0=\{0,1,2,\dots\}$, and $\Q_+=\{q\in \Q\;|\;q\ge 0\}$.

\medskip
An {\it integral domain} is a commutative ring $R\ne \{0\}$ (with multiplicative identity $1$) such that for any $x,y\in R$, $xy=0$ implies $x=0$ or $y=0$.  All the rings that we use in thsi apper are assumed to be commutative and with multiplicative identity $1$. In fact, all of them will be integral domains.
An element $a\in R$ is said to be a {\it unit} of $R$ if it has a multiplicative inverse in $R$. A non-zero non-unit element $a\in R$ is said to be {\it irreducible} (and called an {\it atom}) of $R$ if  $a=bc$ $(b,c\in R$) implies that at least one of the elements $b,c$ is a unit. Two elements $a,b\in R$ are said to be {\it associates} if $a=ub$, where $u$ is a unit. We then write $a\sim b$. An integral domain $R$  is said to be {\it atomic} if every non-zero non-unit element of $R$ can be written as a finite product of atoms. An integral domain $R$ is said to be an {\it ACCP domain} if every increasing sequence 
\[(a_1) \subseteq (a_2) \subseteq (a_3)\subseteq \dots\]
of principal ideals of $R$ is stationary, meaning that $(a_{n})=(a_{n+1})=(a_{n+2})=\dots$ for some $n$.

\medskip
Many notions related to factorization in an integral domain can be already defined in the underlying multiplicative monoid $(R,\cdot)$. Hence we can generalize them by defining them in an arbitrary commutative monoid with zero $(M,\cdot)$, and then also in an arbitrary additive monoid with infinity $(M,+)$ by just translating the terminology from the multiplicative one to the additive one. The next definitions illustrate this point of view.

\medskip
A {\it commutative monoid with infininity}, written additively, is a non\-emp\-ty set $M$ with an operation $+:M\times M\to M$ which is associative, commutative, has an identity element called {\it zero} ( i.e., an element $0\in M$ such that $a+0=a$ for every $a\in M$), and has an {\it infinity element} (i.e., an element $\infty\in M$ such that $x+\infty=\infty$ for all $x\in M$).  The infinity element corresponds to the zero element of multiplicative monoids. All the monoids with infinity used in this paper are assumed to be commutative and written additively. From now on we call them just {\it monoids}.  (They are studied, for example,  in \cite{k}.) A nonempty subset $I$ of a monoid $M$ is called an {\it ideal} of $M$ if $M+I=I$, i.e., if for every $a\in I$, $M+a\subseteq I$. (Here $S_1+S_2=\{x+y\;:\; x\in S_1,\, y\in S_2\}$ for any two subsets $S_1, S_2$ of $M$.) Note that, in particular, $\{\infty\}$ is the smallest and $M$ the biggest ideal of $M$. An ideal $I$ of $M$ is said to be {\it principal} if there is an element $a\in I$ such that $I=M+a$. We then write $I=(a)$. An element $a\in M$ is said to be a {\it unit} of $M$ if it has an additive inverse in $M$.  Two elements $a,b\in M$ are said to be {\it associates} if $a=u+b$, where $u$ is a unit. We then write $a\sim b$. A non-unit element $a\in M$ is said to be {\it irreducible} (and called an {\it atom}) of $M$ if  $a=b+c$ $(b,c\in M)$ implies that at least one of the elements $b,c$ is a unit.  A monoid $M$  is said to be {\it atomic} if every non-zero non-unit element of $M$ can be written as a finite sum of atoms. A monoid $M$ is said to be an {\it ACCP monoid} if every increasing sequence 
\[(a_1) \subseteq (a_2) \subseteq (a_3)\subseteq \dots\]
of principal ideals of $M$ is stationary.

\medskip
As we said in Introduction, it is well-known that {\it an ACCP domain is atomic}. This is in fact a property of the underlying multiplicative monoid of the domain, so the analogous statement holds for monoids as well. The converse is not true, as we are going to see in our theorem in the next section.

\medskip
If $M$ is a  monoid and $F$ is a field, we will consider the {\it monoid ring} $F[X;M]$ associated with $M$. It consists of the polynomial expressions (also called polynomials)
\[f=a_0X^{\alpha_0}+a_1X^{\alpha_1}+\dots +a_nX^{\alpha_n},\]
where $n\ge 0$, \,$a_i\in F$, \,and $\alpha_i\in M$ $(i=0,1,\dots, n)$. In all the situations in which we will be considering $F[X;M]$, $M$ is a submonoid of the monoid $\Q_+$. Then the monoid ring $F[X;M]$ is an integral domain (we will say a {\it monoid domain}), as it is easy to see, and the units of $F[X;M]$ are precisely the elements of $F$.  

\section{The example}\label{example}
\begin{proposition}\label{ACCP_iff_ACCP}
Let $M$ be a submonoid of $\Q_+$. Then the monoid domain $F[X;M]$ is ACCP if and only if the monoid $M$ is ACCP. 
\end{proposition}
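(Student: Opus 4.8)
The plan is to prove both directions by relating principal ideals (and the divisibility/factorization structure) of the monoid $M$ to those of the monoid domain $F[X;M]$. The key observation, which I would establish first as the technical heart of the argument, is that for monomials the divisibility in the ring mirrors divisibility in $M$: if $\alpha,\beta\in M$, then $X^\alpha$ divides $X^\beta$ in $F[X;M]$ (i.e.\ $(X^\beta)\subseteq(X^\alpha)$) if and only if $\beta-\alpha\in M$, i.e.\ $(\beta)\subseteq(\alpha)$ in $M$. Since the units of $F[X;M]$ are exactly the nonzero elements of $F$, a chain of principal monoid ideals $(\alpha_1)\subseteq(\alpha_2)\subseteq\cdots$ in $M$ translates verbatim into a chain $(X^{\alpha_1})\subseteq(X^{\alpha_2})\subseteq\cdots$ of principal ideals in the ring, and this chain stabilizes in the ring if and only if the original stabilizes in $M$. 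This immediately gives the contrapositive of the ``only if'' direction: if $M$ is not ACCP, a strictly ascending chain of principal monoid ideals yields a strictly ascending chain of monomial principal ideals in $F[X;M]$, so $F[X;M]$ is not ACCP.

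For the ``if'' direction (assume $M$ is ACCP, prove $F[X;M]$ is ACCP), the plan is to take an arbitrary ascending chain of principal ideals $(f_1)\subseteq(f_2)\subseteq\cdots$ in $F[X;M]$ and extract enough numerical data to force stabilization. Here the right tool is the \emph{order} function: for a nonzero $f\in F[X;M]$ write $f=a_0X^{\alpha_0}+\cdots+a_nX^{\alpha_n}$ with $\alpha_0<\alpha_1<\cdots<\alpha_n$ and $a_i\neq0$, and set $\operatorname{ord}(f)=\alpha_0$, the smallest exponent appearing. The crucial multiplicative property is that $\operatorname{ord}$ is a monoid homomorphism from $(F[X;M]\setminus\{0\},\cdot)$ to $(M,+)$: because $M\subseteq\Q_+$ is totally ordered and cancellative, the lowest-degree terms of a product do not cancel, so $\operatorname{ord}(fg)=\operatorname{ord}(f)+\operatorname{ord}(g)$. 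Consequently, if $(f)\subseteq(g)$ in the ring, then $g\mid f$, so $\operatorname{ord}(g)$ divides $\operatorname{ord}(f)$ in $M$, giving $(\operatorname{ord}(f))\subseteq(\operatorname{ord}(g))$ in $M$.

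Applying this to the chain, I obtain an ascending chain $(\operatorname{ord}(f_1))\subseteq(\operatorname{ord}(f_2))\subseteq\cdots$ of principal ideals in $M$, which stabilizes because $M$ is ACCP; say $(\operatorname{ord}(f_n))=(\operatorname{ord}(f_{n+1}))=\cdots$ from some index $N$ on. The remaining task is to upgrade the stabilization of the orders to the stabilization of the ring ideals themselves. For $m\ge N$ we have $(f_m)\subseteq(f_{m+1})$ together with $\operatorname{ord}(f_m)=\operatorname{ord}(f_{m+1})$ (the orders being associate in $M$ means, since $M$ has no nontrivial units—$0$ is its only unit—that they are actually equal). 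Writing $f_m=f_{m+1}\cdot h$ for some $h\in F[X;M]$ and comparing orders forces $\operatorname{ord}(h)=0$; I then argue that $h$ is a unit, so $(f_m)=(f_{m+1})$. This last implication is the step I expect to be the main obstacle, because an element of order $0$ need not be a unit in a general monoid domain (for instance $1+X^\alpha$ has order $0$ but is typically a nonunit), so this naive order argument does not by itself close the gap.

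To handle the obstacle I would strengthen the numerical invariant so that it genuinely detects associatedness. The clean fix is to track both endpoints: alongside $\operatorname{ord}(f)$ (the least exponent) consider the \emph{degree} $\deg(f)$ (the largest exponent $\alpha_n$), which is also additive under multiplication, so that $f_m=f_{m+1}h$ gives $\deg(f_m)=\deg(f_{m+1})+\deg(h)$ and $\operatorname{ord}(f_m)=\operatorname{ord}(f_{m+1})+\operatorname{ord}(h)$. The ``width'' $\deg(f)-\operatorname{ord}(f)$ is then additive as well, and the sequence of widths $\deg(f_m)-\operatorname{ord}(f_m)$ is nonincreasing in $m$ along the chain (since each $h$ contributes a nonnegative width); as a sequence of nonnegative rationals of a controlled shape it cannot decrease forever, forcing $\deg(h)=\operatorname{ord}(h)=0$ from some point on, whence $h$ is a monomial $a X^0=a\in F$, a unit. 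Combined with the order-stabilization from ACCP of $M$, this yields $(f_m)=(f_{m+1})$ for all large $m$, completing the ``if'' direction. I would present the monomial-divisibility lemma and the additivity of $\operatorname{ord}$ and $\deg$ as the two reusable facts, and organize the proof as: (1) monomial translation giving the easy direction, (2) additivity of the invariants, (3) ACCP of $M$ stabilizing orders, (4) the width argument forcing the cofactor to be a unit.
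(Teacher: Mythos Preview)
Your easy direction matches the paper's. For the converse you chose the wrong invariant and then had to patch it. The paper applies ACCP of $M$ directly to the \emph{degrees} (largest exponents): writing $f_i=f_{i+1}g_i$ gives $\deg(f_i)=\deg(f_{i+1})+\deg(g_i)$, so $(\deg f_1)\subseteq(\deg f_2)\subseteq\cdots$ is an ascending chain of principal ideals in $M$ which stabilizes, forcing $\deg(g_i)=0$ eventually. Because $M\subseteq\Q_+$, every exponent of $g_i$ lies in $[0,\deg(g_i)]=\{0\}$, so $g_i\in F$ is a unit and the chain $(f_i)$ stabilizes. No order, no width.

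Your detour through $\operatorname{ord}$ is unnecessary precisely because of the hypothesis $M\subseteq\Q_+$: in this setting $\deg(h)=0$ \emph{does} force $h$ to be a unit, even though $\operatorname{ord}(h)=0$ does not. More seriously, step~(4) has a genuine gap. The claim that a nonincreasing sequence of nonnegative rationals ``of a controlled shape'' cannot decrease forever is unjustified (take $1,\tfrac12,\tfrac14,\ldots$), and you give no mechanism linking the widths back to the ACCP hypothesis on $M$; indeed the widths $\deg(f_m)-\operatorname{ord}(f_m)$ need not even lie in $M$. The repair is immediate---run your ACCP argument on the chain of $\deg(f_i)$ rather than (or in addition to) the chain of $\operatorname{ord}(f_i)$---but as written the width step does not stand.
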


\begin{proof}
Suppose $F[X;M]$ is ACCP. Consider an increasing sequence 
\[(a_1) \subseteq (a_2) \subseteq (a_3) \subseteq \dots\]
of principal ideals of $M$.
It corresponds to it an increasing sequence
\[(X^{a_1}) \subseteq (X^{a_2}) \subseteq (X^{a_3}) \subseteq \dots\] 
of principal ideals of $F[X;M]$. It is stationary since $F[X;M]$ is ACCP, hence there is an $n\in\N$ such that
\[(X^{a_n})=(X^{a_{n+1}})=(X^{a_{n+2}})=\dots\]
Hence
\[X^{a_n} \sim X^{a_{n+1}} \sim X^{a_{n+2}} \sim \dots\]
Since the units of $F[X;M]$ are precisely the elements of $F$, we have
\[a_n=a_{n+1}=a_{n+2}=\dots,\]
hence
\[(a_n) = (a_{n+1}) = (a_{n+2}) = \dots\]
Hence $M$ is ACCP.

Conversely, suppose that $M$ is ACCP. Consider an increasing sequence 
\begin{equation}\label{ideals_of_FXM}
(f_1)\subseteq (f_2) \subseteq (f_3) \subseteq \dots
\end{equation}
of principal ideals of $F[X;M]$. For each $i=1,2,3,\dots$ let
\[f_i=a^i_1X^{\alpha^i_1}+a^i_2X^{\alpha^i_2}+ \dots + a^i_{n_i}X^{\alpha^i_{n_i}}\]
with 
\[\alpha^i_1>\alpha^i_2>\alpha^i_3>\dots\]
For each $i=1,2,3,\dots$ there is a $g_i\in F[X;M]$ such that
\begin{equation}\label{f_i_g_i}
f_i=f_{i+1}g_i.
\end{equation}
If $\deg(g_i)=\beta_i$ for $i=1,2,3,\dots$, then
\begin{equation}\label{alpha_beta}
\alpha^i_1=\alpha^{i+1}_1+\beta^i_1,
\end{equation}
hence we have an increasing sequence 
\[(\alpha^1_1)\subseteq (\alpha^2_1)\subseteq (\alpha^3_1)\subseteq\dots\]
of principal ideals of $M$. Since $M$ is ACCP, this sequence is stationary, hence
\[\alpha^n_1 \sim \alpha^{n+1}_1 \sim \alpha^{n+2}_1 \sim\dots,\]
hence (since $0$ is the only invertible element of $M$)
\[\alpha^n_1 = \alpha^{n+1}_1 = \alpha^{n+2}_1 = \dots\]
It now follows from (\ref{alpha_beta})
that
\[0=\beta^n_1=\beta^{n+1}_1=\beta^{n+2}_1=\dots,\]
hence the sequence (\ref{ideals_of_FXM}) is stationary. Hence $F[X;M]$ is ACCP.
\end{proof}

The next theorem is our example of a monoid domain which is not an ACCP domain.

\begin{theorem}\label{thm_example}
Let 
\[p_1=2,\,\, p_2=3,\,\, p_3=5,\,\dots\]
be the sequence of prime numbers. Consider the submonoid 
\[M=\langle\, \frac{1}{p_1p_3},\; \frac{1}{p_2p_4},\; \frac{1}{p_3p_5}, \dots \,\rangle\]
of $\Q_+$. Then the monoid domain $F[X;M]$ is atomic, but not ACCP.
\end{theorem}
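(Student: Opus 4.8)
The plan is to prove the two assertions separately. The failure of ACCP is the easy half and I would dispatch it first via Proposition~\ref{ACCP_iff_ACCP}; atomicity of the monoid $M$ is also clean; and atomicity of the domain $F[X;M]$ is where the real work lies.

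\emph{Failure of ACCP.} By Proposition~\ref{ACCP_iff_ACCP} it is enough to show that $M$ is not an ACCP monoid, and for this I would write down an explicit non-stationary chain. Put $g_n=\tfrac{1}{p_np_{n+2}}$ for the $n$-th generator. Since $p_{n+2}g_n=\tfrac{1}{p_n}$, we have $\tfrac{1}{p_n}\in M$ for all $n$, and moreover
\[\frac{1}{p_n}-\frac{1}{p_{n+2}}=(p_{n+2}-p_n)\,g_n\in M\setminus\{0\}.\]
Restricting to odd indices and setting $a_k=\tfrac{1}{p_{2k-1}}$, each $a_k$ lies in $M$, the sequence $a_1>a_2>\cdots$ is strictly decreasing, and $a_k-a_{k+1}=(p_{2k+1}-p_{2k-1})\,g_{2k-1}\in M\setminus\{0\}$. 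Thus $a_k=a_{k+1}+(a_k-a_{k+1})$ yields inclusions $(a_1)\subseteq(a_2)\subseteq\cdots$, which are strict because $0$ is the only unit of $M$. Hence $M$, and therefore $F[X;M]$, is not ACCP.

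\emph{$M$ is atomic.} Since $M$ is generated by the $g_n$, it suffices to prove that each $g_n$ is an atom; then every nonzero element, being by definition a finite sum of generators, is a finite sum of atoms. I would argue with the $p$-adic valuations $v_{p_k}$. The only generators whose denominator contains $p_n$ are $g_n$ and (when $n\ge 3$) $g_{n-2}$, and both are $\ge g_n$, because $g_{n-2}=\tfrac{1}{p_{n-2}p_n}>\tfrac{1}{p_np_{n+2}}=g_n$. Consequently any $a\in M$ with $0<a<g_n$ can use neither of these generators, so $v_{p_n}(a)\ge 0$. If now $g_n=a+b$ with $a,b\in M\setminus\{0\}$, then $0<a,b<g_n$, whence $v_{p_n}(a+b)\ge 0$; but $v_{p_n}(g_n)=-1$, a contradiction. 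Thus every $g_n$ is an atom and $M$ is atomic.

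\emph{$F[X;M]$ is atomic.} This is the hard part and where I expect the main obstacle. First I record the elementary fact that a monomial $X^\alpha$ factors in $F[X;M]$ only into monomials (the extreme terms of a product multiply in a domain), so $X^{g_n}$ is an atom of $F[X;M]$ because $g_n$ is an atom of $M$; hence any monomial $aX^\alpha=a\prod_j(X^{g_j})^{e_j}$, with $\alpha=\sum_j e_j g_j$, is already a finite product of atoms. For a general nonzero non-unit $f$ I would split off the monomial $X^{\operatorname{ord}(f)}$, reducing to the case $f(0)\neq 0$, in which every potential factor again has nonzero constant term and positive degree. The obstruction is that $\deg$ takes values in the dense, non-well-ordered monoid $M$ and $F[X;M]$ is not a bounded-factorization domain, so neither an induction on degree nor a length bound is available; finiteness must instead be extracted from the arithmetic of $M$. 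The structural features I would exploit are that the estimate $v_{p_k}(q)\ge -1$ forces every exponent occurring in $F[X;M]$ to have squarefree denominator, and that the odd- and even-indexed generators involve disjoint sets of primes, so that $M$ splits internally into two independent ``prime-chain'' monoids. The crux is to rule out an infinite descending factorization $f=h_1h_2\cdots$ with $\deg h_i\to 0$: such small factors are forced to use generators of arbitrarily large index, and the heart of the matter is that the large primes thereby introduced cannot all cancel in the product, so that every factorization of a fixed $f$ is in fact confined to a finitely generated (hence ACCP, hence atomic) submonoid ring. Controlling this cancellation is exactly the step I expect to be delicate, and it is where the localization of $F[X;M]$ at the ideal of polynomials with zero constant term, together with the monoid-theoretic analysis above, would be brought to bear.
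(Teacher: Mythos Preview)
Your arguments for the failure of ACCP and for the atomicity of $M$ are correct; modulo cosmetic differences (you use the chain $(1/p_{2k-1})$ rather than the paper's $(1/p_i+1/p_{i+1})$, and you phrase the atom check via $p$-adic valuations), they match the paper's.

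The genuine gap is in the atomicity of $F[X;M]$, and it is twofold. First, your reduction ``split off $X^{\operatorname{ord}(f)}$'' is not generally available: since $M$ is only a monoid, $X^{\operatorname{ord}(f)}$ need not divide $f$ in $F[X;M]$. With $g_n=1/(p_np_{n+2})$, the element $f=X^{g_2}+X^{g_1}=X^{1/21}+X^{1/10}$ has $\operatorname{ord}(f)=1/21$, but $1/10-1/21=11/210\notin M$ (any element of $M$ with $2$ in its reduced denominator must involve the generator $g_1=1/10$ and hence be $\ge 1/10>11/210$), so $X^{1/21}\nmid f$. Second, and more importantly, your endgame---that the factorizations of a fixed $f$ are confined to some finitely generated submonoid ring---is precisely the assertion left unproved; for monomials it is outright false (e.g.\ $X^{1/2}=(X^{g_1})^5=(X^{g_1})^3(X^{g_3})^{11}=\cdots$ uses unboundedly many generators), so any such argument would \emph{require} the reduction to $f(0)\ne0$, which you have just lost. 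The paper sidesteps both issues by working only with the \emph{top} degree: assuming an infinite refinement $f=f_0f_1=f_0f_{10}f_{11}=\cdots$ with all factors non-constant, one has $\beta=\deg f=\sum_{k\le n}\beta_{1^k0}+\beta_{1^{n+1}}$ with each $\beta_{1^k0}>0$; since these partial sums are bounded by $\beta$, every fixed atom of $M$ occurs in them only boundedly often, so for large $n$ the atom $1/(p_{j-2}p_j)$ carrying the largest prime $p_j$ has $p_j$ exceeding every prime in the reduced denominator of $\beta$ and occurs fewer than $p_j$ times, forcing $p_j$ into the reduced denominator of the partial sum and yielding the contradiction.
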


\begin{proof}
To show that $M$ is atomic, it is enough to show that each $\displaystyle{\frac{1}{p_ip_{i+2}}}$ ($i\ge 1$) is an atom of $M$. Suppose to the contrary, i.e., that $p_ip_{i+2}$ is not an atom for some $i\in\{1,2,3,\dots\}$. Then
\[\frac{1}{p_ip_{i+2}}=a+b,\]
where $a,b$ are two nonzero elements of $M$, both smaller than $\displaystyle{\frac{1}{p_ip_{i+2}}}$. Hence 
\begin{align}
a &= \frac{k_1}{p_{\alpha_1}p_{\alpha_1+2}}+\dots+\frac{k_r}{p_{\alpha_r}p_{\alpha_r+2}},\label{eq_1}\\
b &= \frac{l_1}{p_{\beta_1}p_{\beta_1+2}}+\dots+\frac{l_s}{p_{\beta_s}p_{\beta_s+2}},\label{eq_2}
\end{align}
where $i<\alpha_1<\dots <\alpha_r$ ($k_1,\dots, k_r\in\N$) and $i<\beta_1<\dots <\beta_s$ ($l_1,\dots, l_s\in\N$).
However when we add all the fractions from (\ref{eq_1}) and (\ref{eq_2}), using the product of the denominators that appear in (\ref{eq_1}) and (\ref{eq_2}) as a common denominator, we do not get $p_i$ in the denominator of the sum. That is a contradiction, so each $\displaystyle{\frac{1}{p_ip_{i+2}}}$ is an atom.

\smallskip
Now consider the following sequence of principal ideals of $M$:
\[\left(\frac{1}{2}+\frac{1}{3}\right), \left(\frac{1}{3}+\frac{1}{5}\right), \left(\frac{1}{5}+\frac{1}{7}\right), \dots, \left(\frac{1}{p_i}+\frac{1}{p_{i+1}}\right),\dots \]
We have
\[\left(\frac{1}{p_i}+\frac{1}{p_{i+1}}\right)\subseteq \left(\frac{1}{p_{i+1}}+\frac{1}{p_{i+2}}\right)\]
for $i=1,2,3,\dots$ since 
\[\frac{1}{p_i}+\frac{1}{p_{i+1}}=\frac{1}{p_{i+1}}+\frac{1}{p_{i+2}}+\frac{p_{i+2}-p_i}{p_ip_{i+2}}.\]
This sequence is not stationary, so $M$ is not ACCP. Hence, by Proposition \ref{ACCP_iff_ACCP}, $F[X;M]$ is not ACCP.

We now show that $F[X;M]$ is atomic. Suppose to the contrary. Then there is an element $f\in F[X;M]$ such that 
\begin{align*}
f &= f_0f_1\\
   &= f_0f_{10}f_{11}\\
   &= f_0f_{10}f_{110}f_{111}\\
   &=\dots,
\end{align*}
where $f_0, f_1, f_{10}, f_{11}, f_{110}, f_{111}, \dots$ are non-constant polynomials. Let $\beta$ be the largest exponet of $f$ and $\beta_{111\dots 10}$ (resp. $\beta_{111\dots 11}$) the largest exponents of the polynomials $f_{111\dots 10}$ (resp. $f_{111\dots 11}$). Since
\begin{align*}
\beta &= \beta_0+\beta_1\\
          &= \beta_0+\beta_{10}+\beta_{11}\\
          &= \beta_0+\beta_{10}+\beta_{110}+\beta_{111}\\ 
          &= \dots,
\end{align*}
no atom of $M$ can appear as an addend in infinitely many $\beta_{111\dots 10}$. Let $p_i$ be the largest prime that appears in the denominator of $\beta$, written in reduced form. There is an $n$ big enough so that when we write $\beta_0+\beta_{10}+\beta_{110}+\dots +\beta_{111\dots 10}$, with $n$ ones in the last index, as a sum of atoms, the atom 
$\displaystyle{\frac{1}{p_{j-2} p_j}}$ with the largest prime $p_j$ has $j>i$ and appears in the sum less than $p_j$ times (otherwise the sums $\beta_0+\beta_{10}+\beta_{110}+\dots +\beta_{111\dots 10}$ would become arbitrarily large). Then this sum of atoms, when written as a fraction in reduced form, would have $p_j$ in the denominator and so it could not be equal to $\beta$, a contradiction. Thus $F[X;M]$ is atomic.
\end{proof}

\section{Concluding remarks}
Let $M$ be a submonoid of $\Q_+$ and $F$ a field. If the monoid domain $F[X;M]$ is atomic, then the monoid $M$ is atomic, as it was proved in \cite[Proposition 2.10]{gk}. We would like to mention Question 2.11 from \cite{gk} which asks if the opposite direction is also true. In other words, is it true that $F[X;M]$ is atomic if and only if $M$ is atomic. (The analogous statement for the ACCP property is true, as we proved in the above Proposition \ref{ACCP_iff_ACCP}.) In our example in Theorem \ref{thm_example} both the monoid $M$ and the monoid domain $F[X;M]$ are atomic.

\end{document}